  \renewcommand\appendix{\par
  \setcounter{section}{0}
  \setcounter{sub
  section}{0}
  \setcounter{figure}{0}
  \setcounter{table}{0}
  \renewcommand\thesection{ Appendix \Alph{section}}
  \renewcommand\thefigure{\Alph{section}\arabic{figure}}
  \renewcommand\thetable{\Alph{section}\arabic{table}}
}
\tikzstyle{mybox} = [draw=black, fill=white,  thick,
\tikzstyle{mybox} = [draw=black, fill=white,  thick,
\newtheorem{lemma}{Lemma}[section]
\newtheorem{theorem}{Theorem}[section]
\begin{document}

\title{Upper bounds for domination numbers of graphs using  Tur\'an's Theorem and  Lov\'asz local lemma}

\author{Sharareh Alipour, Amir Jafari}
\maketitle

\begin{abstract}
Let $G$ be a connected graph of order $n$ with vertex set $V(G)$. A subset $S\subseteq V(G)$ is an $(a,b)$-dominating set if every vertex  $v\in S$ is adjacent to at least $a$ vertices in $S$ and every $v\in V\setminus S$ is adjacent to at least $b$ vertices in $S$. The minimum cardinality of an $(a,b)$-dominating set of $G$ is the $(a,b)$-domination number of $G$, denoted by  $\gamma_{a,b}(G)$. There are various results about upper bounds  for $\gamma_{a,b}(G)$ when $G$ is regular or $a$ and $b$ are small numbers. 

In the first part of this paper, for a given graph $G$ with the minimum degree of $\max\{a,b\}$, 
we define a new graph $G'$ associated to $G$ and show that the independence number of this graph is related to $\gamma_{a,b}(G)$.
In the next part, using Lov\'asz local lemma, we give a randomized approach to improve previous results in some special cases.
 \end{abstract}

\section{Introduction}
\subsection{Problem statement}
In a graph $G$ with vertex set $V(G)$ and edge set $E(G)$, the open neighborhood of a vertex $v$ is $N(v) = \{u \in V(G):uv \in E(G)\}$. The degree of $v$, denoted by $d(v)$, is the cardinality of $N(v)$. Let $\delta(G)$ ($\Delta(G)$) be  the minimum (maximum) degree of vertices of $G$ and $G$ is $r$-regular if $d(v) = r$ for all $v \in V$ .

For positive integers $a$ and $b$, an $(a,b)$-dominating set of $G$ is a subset $S\subseteq V(G)$ such that every vertex  $v\in S$ is adjacent to at least $a$ vertices in $S$ and $v\in V(G)\setminus S$ is adjacent to at least $b$ vertices in $S$

The minimum cardinalities of $(a,b)$-dominating sets is denoted by $\gamma_{a,b}(G)$.
The special cases of when $(a,b)$ is one of the following pairs $(k,k)$, $(k-1,k)$ are respectively called $k$-tuple total, $k$-tuple  dominating numbers in the literature.

\subsection{Related works and our results}

Domination in graphs is now well studied in graph theory and the literature on this subject has been surveyed and detailed in the two books by Haynes, Hedetniemi, and Slater \cite{hed,sla}.
Dominating sets are of practical interest in several areas. 
The complexity of the domination problem also has been well-studied in the literature, see \cite{chang}. The hardness of approximation of the  domination problem has also been extensively investigated in the literature, see \cite{aus,lia,lia2,klas}.

While determining the exact value of $\gamma_{a,b}(G)$ is not easy, many studies focus on their upper bounds\cite{har2,klas,lia,lia2}. Here, we present some known upper bounds.

Let $G_{14}$ be the Heawood graph (or, equivalently, the incidence bipartite graph of the Fano plane) on $14$ vertices shown in Figure \ref{figh}. In \cite{hen}, Henning  and Yeo proved some theorems about strong transversal in hypergraphs and then as an application of their hypergraph results they proved the following theorem.
\begin{theorem}{\cite{hen}}
\label{1}
If $G \neq G_{14}$ is a connected graph of order $n$ with $\delta(G)\geq 3$, then $\gamma_{2,2}(G) \leq \frac{11}{13}n$, and $\gamma_{2,2}(G_{14})=12$.
\end{theorem}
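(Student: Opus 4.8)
The plan is to translate $(2,2)$-domination into a \emph{double (strong) transversal} problem on a hypergraph, attack that problem by a reduction/discharging argument, and treat the Heawood graph as an isolated extremal case. First I would pass to the \emph{open neighborhood hypergraph} $H_G$ on vertex set $V(G)$ whose hyperedges are the sets $N(v)$, $v\in V(G)$. By definition $S$ is a $(2,2)$-dominating set exactly when $|S\cap N(v)|\ge 2$ for every $v$, i.e.\ $S$ is a double transversal of $H_G$; since $\delta(G)\ge 3$ every hyperedge has size at least $3$. Dually, writing $D=V(G)\setminus S$, the set $S$ is $(2,2)$-dominating iff $|D\cap N(v)|\le d(v)-2$ for all $v$, so minimizing $|S|$ is the same as maximizing such a \emph{limited packing} $D$, and proving $\gamma_{2,2}(G)\le \frac{11}{13}n$ is equivalent to producing a removable set $D$ with $|D|\ge \frac{2}{13}n$. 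In the $3$-regular case this collapses to the pairwise disjointness of the open neighborhoods $N(d)$, $d\in D$ (the open-packing condition), which will be the combinatorial heart of the extremal analysis.

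\textbf{The general bound.} I would prove the transversal bound by induction on $n$ (or on $n+|E(G)|$) built around a list of \emph{reducible configurations}. Concretely, whenever $G$ contains a vertex of degree $\ge 4$, or a short local pattern (two degree-$3$ vertices with overlapping neighborhoods, a short cycle through degree-$3$ vertices, and so on), one can either delete a small gadget or force a few vertices into $S$, recurse on the smaller graph, and verify that the ratio of vertices forced into $S$ never exceeds $\frac{11}{13}$. Packaging these savings as a discharging scheme — give each vertex charge $1$ and redistribute charge across the gadgets — one aims to show that any graph containing \emph{no} reducible configuration must be $3$-regular with the local structure of the Fano incidence graph, while for every other graph the average charge forces $|S|\le \frac{11}{13}n$. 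Equivalently, a greedy construction of $D$ selecting vertices with pairwise disjoint neighborhoods, corrected on the rare overlaps, should yield a removable set of size at least $\frac{2}{13}n$ once the Heawood pattern is excluded.

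\textbf{The Heawood graph.} For $G_{14}$ I would argue directly through the Fano plane. Identify $V(G_{14})=P\cup L$ (points and lines), so that $N(p)\subseteq L$ is the set of three lines through a point $p$ and $N(\ell)\subseteq P$ the three points on a line $\ell$. Since two distinct points lie on a unique common line and two distinct lines meet in a unique point, any two points (resp.\ any two lines) share a neighbor; hence a set $D$ with pairwise disjoint neighborhoods contains at most one point and at most one line, while any single point together with any single line automatically has disjoint neighborhoods because $P\cap L=\emptyset$. Thus the maximum open packing of $G_{14}$ has size exactly $2$, giving $\gamma_{2,2}(G_{14})=14-2=12$.

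\textbf{Main obstacle.} The genuine difficulty is pinning down the \emph{exact} constant $\frac{11}{13}$ together with the \emph{uniqueness} of $G_{14}$ as the sole exception. Obtaining $\frac{11}{13}$ rather than a weaker fraction requires a complete and tight list of reducible configurations, and the discharging must close with essentially no slack precisely on the Fano/Heawood structure; the hardest part is proving that this structure is the \emph{only} irreducible configuration meeting the critical ratio, so that every other connected graph with $\delta(G)\ge 3$ falls strictly below $\frac{11}{13}n$.
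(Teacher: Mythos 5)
The paper itself contains no proof of this statement: Theorem \ref{1} is quoted from Henning and Yeo \cite{hen}, so your attempt can only be measured against the actual difficulty of their result. The parts of your proposal that are worked out are correct. The reformulation is right: since $a=b=2$ the two defining conditions coincide, so $S$ is $(2,2)$-dominating iff $|S\cap N(v)|\ge 2$ for every $v$, the complement $D$ must satisfy $|D\cap N(v)|\le d(v)-2$, and in the $3$-regular case this is exactly the open packing condition. Your Heawood computation is also correct and complete: in the Fano incidence graph any two points lie on a common line and any two lines share a point, while a point--line pair always has disjoint open neighborhoods, so the maximum open packing has size $2$ and $\gamma_{2,2}(G_{14})=14-2=12$. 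This matches the framework Henning and Yeo use (transversals in the open neighborhood hypergraph), so the setup is faithful to the real proof.

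The genuine gap is the main inequality, which is the entire content of the theorem. Everything you say about it --- ``a list of reducible configurations,'' ``delete a small gadget or force a few vertices into $S$,'' ``packaging these savings as a discharging scheme'' --- is a plan for a proof, not a proof: you exhibit no configuration, no discharging rule, and no verification that a single reduction step preserves the ratio $\frac{11}{13}$. Henning and Yeo obtain this bound through a long inductive analysis of strong transversals in hypergraphs occupying most of their paper, and the constant $\frac{11}{13}$ emerges from that case analysis. Your intended endgame is also miscalibrated: the Heawood graph does not ``meet the critical ratio'' $\frac{11}{13}$ --- it exceeds it, since $\gamma_{2,2}(G_{14})/14=\frac{6}{7}>\frac{11}{13}$ --- so a discharging argument cannot aim to show that $G_{14}$ is the unique structure at the ratio $\frac{11}{13}$ with everything else strictly below; it must show that $G_{14}$ is the unique graph \emph{above} the bound. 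Moreover, as the present paper notes, Henning and Yeo conjecture that the true bound for $G\neq G_{14}$ is $\frac{5}{6}n<\frac{11}{13}n$, which indicates that $\frac{11}{13}$ is an artifact of their particular induction rather than a natural extremal value at which a tight discharging scheme would ``close with essentially no slack.'' As it stands, your proposal proves the easy extremal statement about $G_{14}$ but leaves the substance of the theorem unproved.
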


\begin{figure}[h!]
\centering
  \includegraphics[width=50mm]{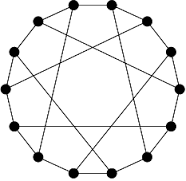}
  \caption{The Heawood graph, $G_{14}$.} 
 \label{figh}
\end{figure}

Now, let, $\tilde{d}_m=\frac{1}{n} \sum^n_{i=1} {d(v_i)+1\choose m}$. Then, we have the following theorem from \cite{chang2}.
\begin{theorem}{\cite{chang2}}
For any graph $G$ of minimum degree $\delta$ with $1 \le k\le \delta+ 1$

$$\gamma_{k-1,k}(G)\leq \frac{\ln(\delta-k+2)+\ln \tilde{d}_{k-1}+1}{\delta-k+2} n.$$
\label{11}
\end{theorem}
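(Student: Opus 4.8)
The plan is to use the first-moment (probabilistic) method with an alteration step, optimized over an inclusion probability $p$, mirroring the classical proof of the ordinary domination bound $\gamma(G)\le\frac{\ln(\delta+1)+1}{\delta+1}n$, which is exactly the case $k=1$ of this statement (there $\tilde d_{0}=1$). First I would recast the condition: a set $S$ is a $(k-1,k)$-dominating set if and only if every vertex $v$ satisfies $|N[v]\cap S|\ge k$. Indeed, if $v\in S$ the requirement $|N(v)\cap S|\ge k-1$ is equivalent to $|N[v]\cap S|\ge k$, while if $v\notin S$ the requirement $|N(v)\cap S|\ge k$ is again $|N[v]\cap S|\ge k$. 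This symmetric reformulation (it is the $k$-tuple domination condition) is what makes the random construction clean.

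Then I would build $S$ as follows. Choose $A\subseteq V(G)$ by placing each vertex in $A$ independently with probability $p$. For each vertex $v$ with $|N[v]\cap A|<k$, add exactly $k-|N[v]\cap A|$ further vertices from $N[v]\setminus A$; this is possible since $|N[v]|=d(v)+1\ge\delta+1\ge k$. The resulting set $S$ satisfies $|N[v]\cap S|\ge k$ for every $v$, hence is a valid $(k-1,k)$-dominating set, and
\[
|S|\le |A|+\sum_{v\in V(G)}\bigl(k-|N[v]\cap A|\bigr)^+ .
\]
Writing $X_v=|N[v]\cap A|\sim\mathrm{Bin}(d(v)+1,p)$ and taking expectations gives $E[|S|]\le np+\sum_{v}E\bigl[(k-X_v)^+\bigr]$.

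The heart of the argument, and the step I expect to be the main obstacle, is the estimate
\[
E\bigl[(k-X)^+\bigr]=\sum_{i=0}^{k-1}\Pr[X\le i]\;\le\;\binom{m}{k-1}(1-p)^{m-k+1}
\qquad\text{for }X\sim\mathrm{Bin}(m,p),\ m\ge k,
\]
where the first equality is the layer-cake identity $(k-X)^+=\sum_{i=0}^{k-1}\mathbf 1[X\le i]$. I would prove the inequality by dividing through by $(1-p)^{m-k+1}$ and showing that the resulting polynomial in $p$ attains its maximum on $[0,1]$ at the endpoint $p=1$, where the two sides coincide (one checks equality holds identically when $m=k$); a monotonicity-in-$p$ argument, or a short induction on $k$, should close it. Granting this, and using $d(v)\ge\delta$ so that $(1-p)^{d(v)-k+2}\le(1-p)^{\delta-k+2}$ together with $\binom{d(v)+1}{k-1}$, I obtain
\[
E[|S|]\le np+(1-p)^{\delta-k+2}\sum_{v}\binom{d(v)+1}{k-1}
= np+(1-p)^{\delta-k+2}\,n\tilde d_{k-1}.
\]

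Finally I would apply $1-p\le e^{-p}$ and minimize $f(p)=np+n\tilde d_{k-1}\,e^{-p(\delta-k+2)}$. Solving $f'(p)=0$ yields $p^{\ast}=\frac{\ln\bigl(\tilde d_{k-1}(\delta-k+2)\bigr)}{\delta-k+2}$, and substituting back collapses the exponential term to $\frac{n}{\delta-k+2}$, giving $f(p^{\ast})=\frac{\ln(\delta-k+2)+\ln\tilde d_{k-1}+1}{\delta-k+2}\,n$; since $\gamma_{k-1,k}(G)\le E[|S|]$ this is the claimed bound. I would close by noting that $\tilde d_{k-1}\ge1$ and $\delta-k+2\ge1$ force $p^{\ast}\ge0$, and that if $p^{\ast}>1$ the asserted right-hand side already exceeds $n\ge\gamma_{k-1,k}(G)$, so only the range $p^{\ast}\in[0,1]$ needs the above analysis.
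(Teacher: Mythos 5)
The paper does not actually prove this statement --- it is quoted from \cite{chang2} in the related-work section --- so the only meaningful comparison is with Chang's original argument, and your proposal is essentially that proof: the reformulation of $(k-1,k)$-domination as the $k$-tuple condition $|N[v]\cap S|\ge k$, the random-set-plus-alteration construction, the layer-cake identity, and the optimization at $p^{\ast}=\frac{\ln(\tilde{d}_{k-1}(\delta-k+2))}{\delta-k+2}$ with the endpoint caveats are all correct. The binomial estimate you flag as the main obstacle does close exactly as you suggest: dividing by $(1-p)^{m-k+1}$ gives $g(p)=\sum_{j=0}^{k-1}(k-j)\binom{m}{j}p^{j}(1-p)^{k-1-j}$, and using $(j+1)\binom{m}{j+1}=(m-j)\binom{m}{j}$ the derivative telescopes to $g'(p)=(m-k)\sum_{i=0}^{k-2}(k-1-i)\binom{m}{i}p^{i}(1-p)^{k-2-i}\ge 0$, so $g$ is nondecreasing on $[0,1]$ and is maximized at $p=1$, where $g(1)=\binom{m}{k-1}$ yields equality; hence your sketch is completable and the whole argument is sound.
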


Also, let, $\hat{d}_m=\frac{1}{n} \sum^n_{i=1} {d_i\choose m}$, then we have the following theorem from \cite{kaz}.
\begin{theorem}{\cite{kaz}}
If $k$ is a positive integer and $G$ is a graph of order $n$ with $\delta > k \geq 1$, then
$$\gamma_{k,k}(G)\leq \frac{\ln(\delta-k)+\ln \hat{d}_{k}+1}{\delta-k} n.$$
\label{22}
\end{theorem}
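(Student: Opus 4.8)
The plan is to use the probabilistic (expectation) method: construct a random candidate set, repair it into a genuine $(k,k)$-dominating set, bound the expected cardinality, and optimize the sampling probability. The shape of the target bound, $\frac{\ln(\delta-k)+\ln\hat{d}_k+1}{\delta-k}n$, is exactly what falls out of minimizing an expression of the form $pn+n\hat{d}_k(1-p)^{\delta-k}$, so the whole argument is organized to reduce to that one-variable optimization.

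First I would fix a probability $p\in[0,1]$ and form a random set $A\subseteq V(G)$ by placing each vertex into $A$ independently with probability $p$, so that $\mathbb{E}|A|=pn$. Call a vertex $v$ \emph{deficient} if $|N(v)\cap A|<k$. Since $(k,k)$-domination demands that \emph{every} vertex, whether in $S$ or outside it, have at least $k$ neighbours inside the final set, I would repair each deficient vertex $v$ by adjoining $k-|N(v)\cap A|$ of its neighbours; this is possible because $d(v)\ge\delta>k$ guarantees enough neighbours exist. Writing $R$ for the union of all repair vertices and setting $S=A\cup R$, the set $S$ is a valid $(k,k)$-dominating set by construction, and $|S|\le|A|+\sum_{v}\max\{0,\,k-|N(v)\cap A|\}$.

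The core estimate is to bound the expected total deficiency. Linearity gives $\mathbb{E}|S|\le pn+\sum_{v}\mathbb{E}\big[\max\{0,\,k-|N(v)\cap A|\}\big]$, and the task is to show each vertex contributes at most $\binom{d(v)}{k}(1-p)^{\delta-k}$. Here I would use $\mathbb{E}[\max\{0,k-X\}]=\sum_{m=0}^{k-1}\Pr[X\le m]$ for $X=|N(v)\cap A|$, the union bound $\Pr[X\le m]\le\binom{d(v)}{m}(1-p)^{d(v)-m}$ obtained by choosing which $d(v)-m$ neighbours lie outside $A$, and finally the monotonicity facts $d(v)\ge\delta$ and $1-p\le 1$ to replace the exponent $d(v)-k$ by $\delta-k$. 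Summing over $v$ and recalling $\sum_v\binom{d(v)}{k}=n\hat{d}_k$ yields $\mathbb{E}|S|\le pn+n\hat{d}_k(1-p)^{\delta-k}$.

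Finally, using $1-p\le e^{-p}$ and minimizing $f(p)=pn+n\hat{d}_k e^{-p(\delta-k)}$, the optimum occurs at $p=\frac{\ln\big((\delta-k)\hat{d}_k\big)}{\delta-k}$, and substituting back collapses the second term to $\frac{n}{\delta-k}$, giving $\mathbb{E}|S|\le\frac{\ln(\delta-k)+\ln\hat{d}_k+1}{\delta-k}n$; since some outcome beats the expectation, $\gamma_{k,k}(G)$ is at most this value. The main obstacle I anticipate is the core estimate: compressing the deficiency sum $\sum_{m=0}^{k-1}\binom{d(v)}{m}(1-p)^{d(v)-m}$ into the single clean term $\binom{d(v)}{k}(1-p)^{\delta-k}$ rests on the geometric-type inequality $\frac{k}{d(v)-k+1}(1-p)\le 1$ controlling consecutive binomial ratios, which is precisely the regime in which the optimizing $p$ is large enough for the bound to be meaningful; the boundary case where the optimal $p$ would exceed $1$ (making the stated bound exceed $n$ and hence trivially true) would need a separate remark.
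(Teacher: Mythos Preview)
The paper does not supply a proof of this statement: it appears in the ``Related works'' subsection as a quotation of Kazemi's result \cite{kaz}, with no argument given. So there is nothing in the present paper to compare your proposal against. That said, your plan---sample a random set $A$, repair deficiencies, bound $\mathbb{E}|S|\le pn+n\hat d_k(1-p)^{\delta-k}$, then optimise using $1-p\le e^{-p}$---is exactly the standard probabilistic argument in the Arnautov--Payan/Alon--Spencer tradition and is, up to bookkeeping, the method used in the cited source.

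The one place where your sketch is genuinely incomplete is the step you yourself flag: collapsing $\sum_{m=0}^{k-1}\binom{d(v)}{m}(1-p)^{d(v)-m}$ to $\binom{d(v)}{k}(1-p)^{d(v)-k}$ is \emph{false} for small $p$ (e.g.\ $d(v)=3$, $k=2$, $p=0$ gives $4>3$), and the ratio condition you quote, $\tfrac{k}{d(v)-k+1}(1-p)\le1$, only makes consecutive terms nondecreasing, which is not enough to bound a sum of $k$ terms by the next one. In the original treatment this is handled differently: one works with the \emph{exact} probabilities $\Pr[|N(v)\cap A|=m]=\binom{d(v)}{m}p^m(1-p)^{d(v)-m}$ rather than the union bound $\Pr[X\le m]\le\binom{d(v)}{m}(1-p)^{d(v)-m}$, and then shows $\sum_{m=0}^{k-1}(k-m)\binom{d(v)}{m}p^m(1-p)^{d(v)-m}\le\binom{d(v)}{k}(1-p)^{d(v)-k}$; the extra $p^m$ factor is exactly what makes the geometric comparison go through for all $p\in[0,1]$ when $d(v)\ge k$. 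With that correction your outline becomes a valid proof. The boundary remark you make (that when the optimiser would force $p>1$ the stated bound exceeds $n$ and is vacuous) is correct and covers the remaining edge cases.
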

Also, we have the following results from \cite{ali}.
\begin{theorem}
\label{a2}
Let $r\ge 3$. If $G$ is an $r$-regular graph of order $n$ which is not the incident graph of a projective plane of order $r-1$, then $\gamma_{r-1,r-1}(G) \leq \frac{r(r-1)-1}{r(r-1)}n$. If $G$ is the incidence graph of a projective plane of order $r-1$ then, $\gamma_{r-1,r-1}(G)=\frac{r(r-1)}{r(r-1)+1}n=2r(r-1)$. 
\end{theorem}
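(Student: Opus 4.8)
The plan is to translate the $(r-1,r-1)$-domination problem for the $r$-regular graph $G$ into an extremal problem about an auxiliary graph, and then apply Tur\'an's theorem together with an analysis of its equality case. First I would observe that, because $G$ is $r$-regular, a set $S$ is $(r-1,r-1)$-dominating exactly when every vertex of $G$ has at most one neighbour in the complement $T=V(G)\setminus S$: a vertex needs at least $r-1$ of its $r$ neighbours in $S$, whether it lies in $S$ or in $T$. Such a set $T$ is precisely an \emph{open packing} of $G$, equivalently an independent set in the auxiliary graph $G'$ whose vertex set is $V(G)$ and in which two vertices are joined when they have a common neighbour in $G$ (the construction from the first part of the paper). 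Hence $\gamma_{r-1,r-1}(G)=n-\alpha(G')$, and the theorem becomes a lower bound on the independence number $\alpha(G')$.

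Next I would bound the degrees in $G'$. For a vertex $u$, the number of vertices sharing a common neighbour with $u$ is at most $\sum_{v\in N(u)}(d(v)-1)=r(r-1)$, so $\Delta(G')\le r(r-1)$, with equality at $u$ precisely when no other vertex shares two common neighbours with $u$. Tur\'an's theorem (in the Caro--Wei / average-degree form $\alpha(G')\ge n/(\bar d_{G'}+1)$) then yields $\alpha(G')\ge n/(r(r-1)+1)$, i.e. $\gamma_{r-1,r-1}(G)\le \frac{r(r-1)}{r(r-1)+1}n$ for every $r$-regular $G$. For the incidence graph of a projective plane of order $r-1$ I would verify this directly: each part has $r(r-1)+1$ vertices, any two vertices in the same part have a unique common neighbour while vertices in opposite parts have none, so $G'$ is the disjoint union of two cliques $K_{r(r-1)+1}$; thus $\alpha(G')=2$ and $n=2(r(r-1)+1)$, giving the stated equality $\gamma_{r-1,r-1}=2r(r-1)=\frac{r(r-1)}{r(r-1)+1}n$.

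To upgrade the generic bound from $\frac{r(r-1)}{r(r-1)+1}n$ to $\frac{r(r-1)-1}{r(r-1)}n$ I would argue through the equality case of Tur\'an's theorem. If the average degree of $G'$ satisfies $\bar d_{G'}\le r(r-1)-1$, then already $\alpha(G')\ge n/(r(r-1))$ and we are done; so the remaining case is $\bar d_{G'}=r(r-1)$, i.e. $G'$ is $r(r-1)$-regular, which forces every pair of vertices of $G$ to have at most one common neighbour. In this case equality in the Caro--Wei bound holds only when $G'$ is a disjoint union of cliques, necessarily of size $r(r-1)+1$; any deviation from this configuration makes the bound strict, and I would show the deviation is large enough to reach the threshold $n/(r(r-1))$. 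Finally I would translate the extremal configuration back to $G$: a clique $K_{r(r-1)+1}$ of $G'$ is a set of $r(r-1)+1$ vertices, each pair having a unique common neighbour and no common neighbour outside the set, which are exactly the incidence axioms of a projective plane of order $r-1$, forcing $G$ to be its incidence graph.

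The main obstacle is this last stability-and-characterization step. Regularity of $G'$ alone does not guarantee $\alpha(G')\ge n/(r(r-1))$ --- short odd cycles show that an $r(r-1)$-regular graph can have independence number below $n/(r(r-1))$ --- so I must exploit the special common-neighbour structure of $G'$ and the exact clique size $r(r-1)+1$ rather than regularity by itself. Concretely, the work is to prove that the only $r$-regular graph whose open-packing number is as small as $n/(r(r-1)+1)$ is the projective-plane incidence graph, and that every other $r$-regular graph clears the larger threshold $n/(r(r-1))$; establishing this sharp dichotomy, including the design-theoretic recognition of the projective plane, is where the real effort lies.
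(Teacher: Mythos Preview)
This theorem is not proved in the present paper at all: it is quoted in the introduction as a prior result from the companion paper \cite{ali}, so there is no in-paper proof to compare your proposal against. That said, your overall strategy --- pass to the common-neighbour graph $G'$ and bound $\alpha(G')$ via a Tur\'an/Caro--Wei argument --- is exactly the method the paper uses in Section~2 for its other bounds (your $G'$ coincides with the paper's construction in Theorem~\ref{t2} when $k=r-1$ and $\delta=r$), and your treatment of the projective-plane incidence graph, where $G'$ splits as two disjoint copies of $K_{r(r-1)+1}$ and hence $\alpha(G')=2$, is correct.

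The substantive gap is the upgrade step, and it is more than just ``where the real effort lies'': the outline you give does not work as written. Your case split ``either $\bar d_{G'}\le r(r-1)-1$ or $\bar d_{G'}=r(r-1)$'' is not a dichotomy, since the average degree need not be an integer; a few vertices of $G'$ having degree below $r(r-1)$ does not force $\bar d_{G'}\le r(r-1)-1$, so Caro--Wei by itself does not yield $\alpha(G')\ge n/(r(r-1))$ in the non-regular case. And, as you yourself note, even when $G'$ is $r(r-1)$-regular the bound $\alpha(G')\ge n/(r(r-1))$ does not follow from regularity alone. So the heart of the theorem --- the sharp separation of the projective-plane incidence graph from all other $r$-regular graphs --- remains entirely unproved in your proposal; you have the correct set-up and the equality case, but not the main inequality, and the stability/characterization argument you allude to would need a genuinely new idea beyond Tur\'an-type averaging.
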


\begin{theorem}
\label{3}
If $G$ is an $r$-regular graph of order $n$ which is not a Moore graph of degree $r$ and diameter $2$ then $\gamma_{r-1,r}(G) \leq \frac{r^2-1}{r^2}n$, otherwise $\gamma_{r-1,r}(G)=\frac{r^2}{r^2+1}n=r^2$.
\end{theorem}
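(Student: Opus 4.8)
The plan is to translate the problem into an independence–number computation in an auxiliary graph and then invoke Brooks' theorem. First I would pass to the complement $T = V(G)\setminus S$ of a candidate dominating set. Since $G$ is $r$-regular, the two defining conditions say exactly that every $v\in T$ has all $r$ of its neighbours in $S$ (so $T$ is independent in $G$) and every $v\in S$ has at most one neighbour in $T$. The second condition is equivalent to requiring that the open neighbourhoods $\{N(u):u\in T\}$ be pairwise disjoint, and together the two conditions are equivalent to the single requirement that no two vertices of $T$ lie at distance $\le 2$ in $G$. Hence, defining $G'$ on vertex set $V(G)$ by joining $u,v$ precisely when $1\le d_G(u,v)\le 2$, I get that $S$ is an $(r-1,r)$-dominating set if and only if $T=V(G)\setminus S$ is an independent set of $G'$. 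This is the specialisation of the general $G'$-construction from the first part of the paper, and minimising $|S|$ becomes maximising $|T|$, giving the exact identity $\gamma_{r-1,r}(G)=n-\alpha(G')$, where $\alpha$ is the independence number.

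Next I would bound the maximum degree of $G'$: a vertex $v$ has $r$ vertices at distance $1$ and at most $r(r-1)$ at distance $2$, so $\Delta(G')\le r^2$, with equality at $v$ exactly when the radius-$2$ ball around $v$ is a tree on $r^2+1$ vertices. Since $E(G)\subseteq E(G')$ and $G$ is connected, $G'$ is connected. Now, plain Tur\'an or Caro--Wei only yields $\alpha(G')\ge n/(\Delta(G')+1)\ge n/(r^2+1)$, which matches the Moore value but does not beat it; the improvement to $n/r^2$ is exactly what Brooks' theorem buys. Provided $G'$ is neither complete nor an odd cycle, Brooks gives $\chi(G')\le\Delta(G')\le r^2$, and since any proper colouring splits $V(G')$ into $\chi(G')$ independent classes, $\alpha(G')\ge n/\chi(G')\ge n/r^2$. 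Substituting into the identity yields $\gamma_{r-1,r}(G)\le n-n/r^2=\frac{r^2-1}{r^2}n$, the desired bound.

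It then remains to dispose of the two Brooks exceptions. For $r\ge 2$ the odd-cycle case is vacuous, since every vertex of $G'$ already has degree at least $r$ from its $G$-neighbours and a short check rules out $G'$ being $2$-regular. The complete case $G'=K_n$ occurs exactly when $G$ has diameter at most $2$, and the Moore bound then forces $n\le r^2+1$. If $n\le r^2$ one checks directly that $\alpha(G')=1$ still gives $\gamma_{r-1,r}(G)=n-1\le\frac{r^2-1}{r^2}n$, so the generic bound survives. The genuinely extremal situation is $n=r^2+1$ with diameter $2$: equality in the Moore bound together with $r$-regularity forces $G$ to be a Moore graph of degree $r$ and diameter $2$, and there $\alpha(G')=1$ gives $\gamma_{r-1,r}(G)=n-1=r^2=\frac{r^2}{r^2+1}n$, matching the stated exceptional value (which is strictly larger than $\frac{r^2-1}{r^2}n$, since $r^4>r^4-1$).

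I expect the main obstacle to be the equality analysis rather than the main inequality. Verifying $\Delta(G')\le r^2$ and the reduction $\gamma_{r-1,r}(G)=n-\alpha(G')$ are routine; the delicate point is arguing cleanly that completeness of $G'$ forces diameter at most $2$, then invoking the Moore bound and its equality characterisation to pin down Moore graphs as the unique extremal family, and confirming that the Brooks exceptions contribute nothing new. In short, the whole content is showing that the only way to exceed $\frac{r^2-1}{r^2}n$ is the Moore-graph configuration.
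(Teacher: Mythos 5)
Your proposal is correct, but note first that this paper never proves Theorem \ref{3} at all: it is imported (together with Theorem \ref{a2}) from the authors' earlier work \cite{ali}, so the only internal comparison available is with the general method of Section 2. Your argument follows that method's philosophy --- build an auxiliary graph on $V(G)$ whose independent sets have dominating complements. Here the right auxiliary graph is the square $G^{2}$ (vertices joined when at distance $1$ or $2$), since for $r$-regular $G$ a set $S$ is $(r-1,r)$-dominating precisely when $V(G)\setminus S$ is a set of vertices pairwise at distance at least $3$, giving the exact identity $\gamma_{r-1,r}(G)=n-\alpha(G^{2})$. But you correctly identify that the paper's own key tool, Lemma \ref{lem} (Tur\'an), is structurally incapable of finishing: with $\Delta(G^{2})\le r^{2}$ it yields only $\alpha(G^{2})\ge n/(r^{2}+1)$, which is the Moore value itself and can never isolate the extremal family. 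Swapping in Brooks' theorem is exactly the right strengthening: it gives $\alpha(G^{2})\ge n/\chi(G^{2})\ge n/r^{2}$ unless $G^{2}$ is complete or an odd cycle; the complete case is precisely the diameter-at-most-$2$ case, and the Moore bound $n\le r^{2}+1$ then splits it into $n\le r^{2}$ (where $n-1\le\frac{r^{2}-1}{r^{2}}n$ holds anyway) and $n=r^{2}+1$ (where $G$ is by definition a Moore graph of degree $r$ and diameter $2$, and $\gamma_{r-1,r}(G)=n-1=r^{2}=\frac{r^{2}}{r^{2}+1}n$ exactly). So what your route buys is a complete, self-contained proof of a statement the paper only cites, obtained by upgrading the paper's Tur\'an step to Brooks while keeping its reduction intact.

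Two details deserve tightening, though neither is a genuine gap. First, your claim that the odd-cycle exception is vacuous because ``a short check rules out $G'$ being $2$-regular'' fails for $G=C_{3}$, $r=2$: there $G^{2}=K_{3}$ is $2$-regular and an odd cycle. No harm results, since $K_{3}$ is also complete and is absorbed by your complete-graph analysis (with $n=3\le r^{2}=4$), but the correct assertion is that every odd cycle arising as a square is complete, not that none arises. Second, connectivity of $G$ must be invoked explicitly --- it is the paper's standing hypothesis from the abstract, and it is essential: for a disjoint union of two Petersen graphs one gets $\gamma_{2,3}=20-2=18>\frac{8}{9}\cdot 20$, so both the theorem as stated and your application of Brooks (which needs $G^{2}$ connected, or else a component-by-component argument in which a complete component can occur) genuinely require it.
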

 
\section{Upper bound for $(a,b)$-domination number using  Tur\'an's Theorem}

In this section, we present our approach for computing an upper bound for $(a,b)$-domination number of a given graph $G$ with $\delta(G)\geq \max\{a,b\}$.
For a given graph $G$, we construct another graph $G'$ with the same set of vertices with the following property. For any independent set $A$ of vertices for $G'$, $V(G)\setminus A$ is an $(a,b)$-dominating set for $G$.
So, our goal is to compute a lower bound for  the independence number of $G'$. 
First, we recall  Tur\'an's Theorem.
\begin{theorem}
Let $G$ be any graph with $n$ vertices, such that $G$ is $K_{r+1}$ -free. Then, the number of edges in $G$ is at most
$(1-\frac{1}{r}).\frac{n^2}{2}.$
\end{theorem}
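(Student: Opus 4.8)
The plan is to prove Tur\'an's Theorem by induction on the number of vertices $n$, keeping $r$ fixed. Since deleting edges can never violate the $K_{r+1}$-free hypothesis and only lowers the edge count, I may assume from the outset that $G$ is \emph{edge-maximal}, meaning that adding any missing edge creates a copy of $K_{r+1}$. The first structural step is to locate a copy of $K_r$ inside $G$. For $n \ge r$ this is automatic: if $G$ has a non-edge $xy$, then adjoining it produces a $K_{r+1}$, which forces $x$ to lie in a $K_r$ already present, while if $G$ has no non-edge it is complete and so $n=r$ (a base case). So I may fix a clique $A$ on $r$ vertices and set $B = V(G)\setminus A$, with $|B| = n-r$.

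Next I would classify the edges of $G$ into three groups and bound each separately. The edges inside $A$ number exactly $\binom{r}{2}$. Each vertex of $B$ is adjacent to at most $r-1$ vertices of $A$, for if some $w \in B$ were joined to all $r$ vertices of $A$ then $A \cup \{w\}$ would be a $K_{r+1}$; hence there are at most $(r-1)(n-r)$ edges between $A$ and $B$. Finally, the subgraph induced on $B$ is itself $K_{r+1}$-free on $n-r$ vertices, so by the induction hypothesis it carries at most $\bigl(1-\tfrac1r\bigr)\tfrac{(n-r)^2}{2}$ edges.

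Summing the three bounds, the total number of edges is at most
\[
\binom{r}{2} + (r-1)(n-r) + \Bigl(1-\tfrac1r\Bigr)\frac{(n-r)^2}{2}.
\]
The key computation is to rewrite each term with the common factor $\bigl(1-\tfrac1r\bigr)$: using $\binom{r}{2} = \bigl(1-\tfrac1r\bigr)\tfrac{r^2}{2}$ and $(r-1)(n-r) = \bigl(1-\tfrac1r\bigr)\,r(n-r)$, the whole expression collapses to $\bigl(1-\tfrac1r\bigr)\tfrac12\bigl(r^2 + 2r(n-r) + (n-r)^2\bigr) = \bigl(1-\tfrac1r\bigr)\tfrac12\bigl(r+(n-r)\bigr)^2 = \bigl(1-\tfrac1r\bigr)\tfrac{n^2}{2}$, which is exactly the claimed bound and closes the induction.

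I expect the main obstacle to be the very first step---guaranteeing the existence of a $K_r$ and justifying the reduction to an edge-maximal graph---rather than the counting, which telescopes neatly into a perfect square. An alternative route that sidesteps this entirely is a probabilistic (Caro--Wei) argument: a uniformly random vertex ordering yields an independent set of expected size $\sum_v 1/(d_{\bar G}(v)+1)$ in the complement $\bar G$, so $\alpha(\bar G) \ge n/(n-\bar d)$ by convexity, where $\bar d$ is the average degree of $G$. Since $K_{r+1}$-freeness of $G$ means $\bar G$ has no independent set of size $r+1$, i.e. $\alpha(\bar G)\le r$, this rearranges to $\bar d \le \bigl(1-\tfrac1r\bigr)n$ and hence $|E(G)| = \tfrac12 n\,\bar d \le \bigl(1-\tfrac1r\bigr)\tfrac{n^2}{2}$, matching the statement and fitting the probabilistic flavour of the rest of the paper.
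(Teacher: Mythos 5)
Your proof is correct, but there is nothing in the paper to measure it against: this statement is Tur\'an's classical theorem, which the authors only \emph{recall} (``First, we recall Tur\'an's Theorem'') and then use as a black box to derive Lemma~\ref{lem}; the paper contains no proof of it. Of your two arguments, the induction is the standard Tur\'an-style proof and is sound: extracting a $K_r$ from an edge-maximal graph, splitting the edges into three classes, and the telescoping identity
\[
\binom{r}{2}+(r-1)(n-r)+\Bigl(1-\tfrac1r\Bigr)\frac{(n-r)^2}{2}
=\Bigl(1-\tfrac1r\Bigr)\frac{\bigl(r+(n-r)\bigr)^2}{2}
=\Bigl(1-\tfrac1r\Bigr)\frac{n^2}{2}
\]
all check out. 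Two small points should be made explicit: the maximality reduction is cleaner stated as \emph{adding} edges to $G$ until it is edge-maximal (your ``deleting edges'' phrasing expresses the same containment in reverse), and since the induction lands on $n-r$ vertices, which may be fewer than $r$, you should record the trivial base case $n\le r$, where even $K_n$ obeys the bound because $\binom{n}{2}\le\bigl(1-\tfrac1r\bigr)\tfrac{n^2}{2}$ holds exactly when $n\le r$. Your Caro--Wei alternative is also correct, and it is arguably the better fit for this paper: applied to $G$ itself rather than to $\overline{G}$, the same inequality $\alpha(G)\ge\sum_v 1/(d(v)+1)\ge n/(\bar d+1)$ proves the paper's Lemma~\ref{lem} directly (a graph with at most $\alpha n$ edges has average degree $\bar d\le 2\alpha$, hence an independent set of size at least $\tfrac{n}{2\alpha+1}$), bypassing Tur\'an's theorem entirely and matching the probabilistic method the authors employ in their Lov\'asz local lemma section.
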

Now, we present the following lemma which follows from Tur\'an's theorem to show that a graph with few edges has a large independence number and hence gives us an upper bound for the $(a,b)$-domination number of $G$.
\begin{lemma}
Let $G$ be a graph with $n$ vertices and at most $\alpha n$ edges, where $\alpha>0$ is a fixed number. Then, the independence number of $G$ is at least $\frac{n}{2\alpha+1}$. 
\label{lem}
\end{lemma}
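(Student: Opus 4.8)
The plan is to apply Tur\'an's Theorem not to $G$ itself but to its complement $\bar{G}$, exploiting the fact that an independent set in $G$ is exactly a clique in $\bar{G}$, so the independence number of $G$ equals the clique number of $\bar{G}$. Writing $m\le \alpha n$ for the number of edges of $G$ and $\beta$ for its independence number, the complement $\bar{G}$ then has exactly $\binom{n}{2}-m$ edges and, crucially, contains no clique on $\beta+1$ vertices; that is, $\bar{G}$ is $K_{\beta+1}$-free.

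First I would set up this dictionary precisely and observe that $\bar{G}$ being $K_{\beta+1}$-free lets me invoke Tur\'an's Theorem with $r=\beta$. This yields the edge bound $\binom{n}{2}-m \le \left(1-\frac{1}{\beta}\right)\frac{n^2}{2}$. Next I would substitute $\binom{n}{2}=\frac{n^2-n}{2}$ together with the hypothesis $m\le \alpha n$ and rearrange. The $\frac{n^2}{2}$ terms cancel, leaving an inequality of the form $\frac{n^2}{2\beta}\le \frac{n}{2}+\alpha n$, which simplifies directly to $\frac{n}{\beta}\le 1+2\alpha$, i.e. $\beta \ge \frac{n}{2\alpha+1}$, as desired.

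The one delicate point --- and the main thing to watch --- is that the value $r=\beta$ fed into Tur\'an's Theorem is itself the quantity being bounded, so I should confirm the argument is not circular. Here Tur\'an is used only as the implication ``clique number $\le \beta \Rightarrow$ few edges,'' and we know the clique number of $\bar{G}$ equals $\beta$ by the very definition of $\beta$, so the hypothesis of Tur\'an is satisfied for this particular $r$. I would also verify the boundary cases: $\beta\ge 1$ always holds (a single vertex is an independent set), so the division by $\beta$ is legitimate; and the extreme instances $\beta=n$ (empty $G$, $\alpha=0$) and $\beta=1$ (complete $G$) both saturate the bound, which both confirms tightness and serves as a sanity check that the inequalities point in the correct direction. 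With these checks in place the chain of inequalities is routine, and the resulting lower bound on $\beta$ immediately translates, via the construction $V(G)\setminus A$ of the previous paragraph, into the sought upper bound on $\gamma_{a,b}(G)$.
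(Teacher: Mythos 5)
Your proof is correct and takes the route the paper itself intends: the paper states this lemma as a direct consequence of Tur\'an's Theorem without writing out the derivation, and the standard way to fill that in is exactly your complementation argument (an independent set of size $\beta$ in $G$ is a clique in $\overline{G}$, so $\overline{G}$ is $K_{\beta+1}$-free, and Tur\'an's edge bound applied to $\overline{G}$ rearranges to $\beta \ge \frac{n}{2\alpha+1}$). Your non-circularity check and the boundary cases are handled soundly, so nothing is missing.
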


Our first application of this idea is the following theorem. The first part of the theorem was stated in Henning and Yeo in \cite{hen}  before, but here we give a much simpler proof. 

\begin{theorem} If $G$ is a graph with  $\delta(G)\geq 3$ and $n$ vertices, then $\gamma_{2,2}(G)\leq \frac{6}{7} n$. If $\delta(G)\geq 4$, then $\gamma_{2,2}(G)\leq \frac{4}{5} n$.
\label{t1}
\end{theorem}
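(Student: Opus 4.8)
The plan is to instantiate the general scheme of this section for the case $(a,b)=(2,2)$ by giving an explicit construction of the auxiliary graph $G'$, and then to bound its number of edges so that Lemma~\ref{lem} applies. Recall that for $(2,2)$-domination a set $S$ is dominating exactly when every vertex $v$ of $G$ (whether or not $v\in S$) has at least two neighbors in $S$; writing $A=V(G)\setminus S$, this is equivalent to the requirement that $|N(v)\cap A|\le d(v)-2$ for every $v$. So I want to place edges into $G'$ so that any $G'$-independent set $A$ automatically satisfies this inequality at every vertex.

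The construction I would use keeps the vertex set of $G$ and, for each vertex $v$, adds a small gadget on the neighborhood $N(v)$. The point is that if $A$ is independent in $G'$ then $A\cap N(v)$ is an independent set of whatever graph we placed on $N(v)$, so I only need a graph on the $d(v)$ vertices of $N(v)$ whose independence number is at most $d(v)-2$, using as few edges as possible. Two vertex-disjoint edges (a matching of size two) already force the independence number down to $d(v)-2$, because an independent set can keep at most one endpoint of each of the two edges; this works whenever $d(v)\ge 4$ and costs only two edges. When $d(v)=3$ two disjoint edges do not fit, and the cheapest way to make the independence number at most $1$ on three vertices is the triangle on $N(v)$, costing three edges. (A single edge, or a path on three vertices, fails: deleting the right single vertex leaves an independent set of size $d(v)-1$.)

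With $G'$ defined this way, the verification of the key property is immediate: for any $G'$-independent $A$ and any vertex $v$, the gadget on $N(v)$ forces $|A\cap N(v)|\le d(v)-2$, hence $v$ retains at least two neighbors in $S=V(G)\setminus A$, so $S$ is a $(2,2)$-dominating set and $\gamma_{2,2}(G)\le n-|A|$. Choosing $A$ to be a maximum independent set of $G'$ gives $\gamma_{2,2}(G)\le n-\alpha(G')$.

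Finally I would count edges. The number of edges of $G'$ is at most the sum over $v$ of the size of its gadget, since coinciding edges only decrease the total. If $\delta(G)\ge 4$, every gadget is a two-edge matching, so $|E(G')|\le 2n$; Lemma~\ref{lem} with $\alpha=2$ yields $\alpha(G')\ge n/5$ and hence $\gamma_{2,2}(G)\le n-n/5=\tfrac45 n$. If only $\delta(G)\ge 3$ is assumed, the degree-$3$ vertices force triangles, so $|E(G')|\le 3n$; Lemma~\ref{lem} with $\alpha=3$ yields $\alpha(G')\ge n/7$ and $\gamma_{2,2}(G)\le n-n/7=\tfrac67 n$. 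I expect the only real content to lie in the gadget design of the second step --- noticing that a size-two matching is both sufficient and cheap, and that degree three is the exceptional case requiring one extra edge; everything afterward is bookkeeping.
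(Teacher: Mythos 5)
Your proposal is correct and follows essentially the same route as the paper: build $G'$ by placing a triangle gadget on the neighborhoods of degree-$3$ vertices and a two-edge matching gadget when four neighbors are available, then apply Lemma~\ref{lem} with $\alpha=3$ (resp.\ $\alpha=2$) to extract an independent set of size $n/7$ (resp.\ $n/5$) whose complement is a $(2,2)$-dominating set. The only cosmetic difference is that in the $\delta\geq 3$ case the paper places triangles at every vertex while you mix triangles and matchings according to degree; both give the $3n$ edge bound, and your hybrid is in fact the construction the paper itself uses later to prove the $\frac{5}{6}n$ bound when half the vertices have degree at least $4$.
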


\begin{proof}
We construct a graph $G'$ from $G$ as follows. The vertices of $G'$ are the same as $G$. When $\delta(G)\geq 3$, for each vertex $v$, choose three of its neighbors and join each pair of them with three edges (see Figure \ref{fig1}). This way we get a graph of at most $3n$ edges. In any independent set $A$ of $G'$, for each vertex $v$, there exists at least $2$ neighbors of $v$ not in $A$, for example in Figure \ref{fig1} at most one of the vertices $u$, $w$ or $t$ can be in $A$.
Therefore, $V\setminus A$ is a $(2,2)$-dominating set for $G$.  

According to Lemma \ref{lem}, the independence number of $G'$ is at least $\frac{n}{7}$. So, $\gamma_{2,2}(G)\leq \frac{6}{7}n$.

When $\delta(G)\geq 4$, for each vertex $v$ choose $4$ of its neighbors arbitrarily and join them with $2$ disjoint edges (see Figure \ref{fig2}). 
This way we get a graph $G'$ of at most $2n$ edges.

 For any independence set $A$ of $G'$,  $V(G)\setminus A$ is a $(2,2)$-dominating set for $G$,  because for any vertex $v$, at least two of its $4$ chosen neighbors are not inside $A$.  By Lemma \ref{lem}, the independence numebr of 
$G'$ is at least $\frac{n}{5}$ and hence $\gamma_{2,2}(G)\leq \frac{4}{5}n$.

\end{proof}

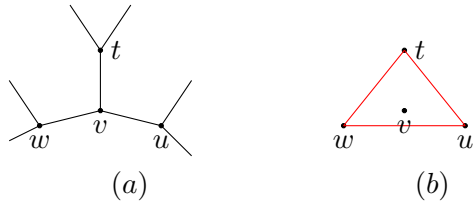
\begin{figure}[htpb]
	\centering
	\begin{tikzpicture}[scale=0.2]

\draw(0,0)--(4,-1);
\draw(0,0)--(-4,-1);
\draw(0,0)--(0,4);

\draw(0,0) node[below] {$v$};
\draw(4,-1) node[below] {$u$};
\draw(-4,-1) node[below] {$w$};
\draw(0,4) node[right] {$t$};

\filldraw(0,0) circle(4pt);
\filldraw(4,-1) circle(4pt);
\filldraw(-4,-1) circle(4pt);
\filldraw(0,4) circle(4pt);

\draw(4,-1)--(6,2);
\draw(4,-1)--(6,-3);

\draw(-4,-1)--(-6,2);
\draw(-4,-1)--(-6,-2);

\draw(0,4)--(2,7);
\draw(0,4)--(-2,7);

\filldraw(20,0) circle(4pt);
\filldraw(24,-1) circle(4pt);
\filldraw(16,-1) circle(4pt);
\filldraw(20,4) circle(4pt);

\draw(24,-1)--(16,-1)[red];
\draw(16,-1)--(20,4)[red];
\draw(20,4)--(24,-1)[red];

\draw(20,0) node[below] {$v$};
\draw(24,-1) node[below] {$u$};
\draw(16,-1) node[below] {$w$};
\draw(20,4) node[right] {$t$};

\draw(0,-5) node[right] {$(a)$};
\draw(20,-5) node[right] {$(b)$};

\end{tikzpicture}
\caption{The construction of $G'$ from $G$ for the case where $\delta(G)\geq 3$. (a) For each vertex $v$ in $G$, (b) we connect it's neighbors to each other in $G'$.}
\label{fig1}
\end{figure}

\begin{figure}[htpb]
	\centering
	\begin{tikzpicture}[scale=0.3]

\draw(0,0)--(4,-2);
\draw(0,0)--(-4,-2);
\draw(0,0)--(4,2);
\draw(0,0)--(-4,2);
\draw(0,0)--(0,3);

\draw(0,0) node[below] {$v$};
\draw(4,-2) node[below] {$u$};
\draw(-4,-2) node[below] {$w$};
\draw(4,2) node[right] {$t$};
\draw(-4,2) node[right] {$l$};
\draw(0,3) node[above]{$m$};

\filldraw(0,0) circle(4pt);
\filldraw(4,-2) circle(4pt);
\filldraw(-4,-2) circle(4pt);
\filldraw(4,2) circle(4pt);
\filldraw(-4,2) circle(4pt);
\filldraw(0,3) circle(4pt);

\draw(24,-2)--(16,-2)[red];
\draw(24,2)--(16,2)[red];

\draw(20,0) node[below] {$v$};
\draw(24,-2) node[below] {$u$};
\draw(16,-2) node[below] {$w$};
\draw(24,2) node[right] {$t$};
\draw(16,2) node[right] {$l$};
\draw(20,3) node[above]{$m$};

\filldraw(20,0) circle(4pt);
\filldraw(24,-2) circle(4pt);
\filldraw(16,-2) circle(4pt);
\filldraw(24,2) circle(4pt);
\filldraw(16,2) circle(4pt);
\filldraw(20,3) circle(4pt);

\draw(0,-5) node[right] {$(a)$};
\draw(20,-5) node[right] {$(b)$};

\end{tikzpicture}
\caption{The construction of $G'$ from $G$ for the case where $\delta(G)\geq 4$. (a) For each vertex $v$ in $G$, (b) we add two pairwise edges between it's four neighbors in $G'$.}
\label{fig2}
\end{figure}
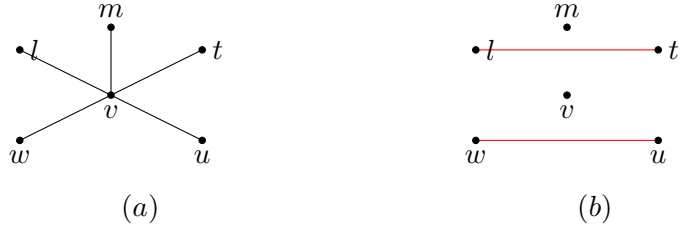

Note that for $\delta\geq 3$, Hening and Yeo conjectured that if $G\neq G_{14}$, then $\gamma_{2,2}(G)\leq \frac{5}{6}n$. By the Theorem \ref{3} we prove an improved result of their  conjecture for graphs with $\delta\geq 4$.

This idea can be applied for $k$-tuple total dominating number as well.

\begin{theorem} If $G$ is a graph with minimum degree at least $k+1$ and $n$ vertices, then $\gamma_{k,k}(G)\le \frac{k(k+1)}{k(k+1)+1}  n$ and if the minimum degree is at least $2k$ then $\gamma_{k,k}(G)\le \frac{2k}{2k+1} n$.
\label{t2}
\end{theorem}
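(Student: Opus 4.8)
The plan is to generalize the construction used in the proof of Theorem~\ref{t1} by replacing the triangle (respectively the two-edge matching) gadget with the appropriate gadget for forcing $k$ chosen neighbors into the dominating set. As before, I would build an auxiliary graph $G'$ on the same vertex set whose independent sets all have $(k,k)$-dominating complements, then count the edges of $G'$ and invoke Lemma~\ref{lem}.

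For the first bound, assuming $\delta(G)\ge k+1$, for each vertex $v$ I would choose $k+1$ of its neighbors (possible precisely because $d(v)\ge k+1$) and insert all $\binom{k+1}{2}$ edges among them, turning this chosen neighborhood into a clique $K_{k+1}$. Let $G'$ be the union of these gadget edges. The key observation is that any independent set $A$ of $G'$ meets each such clique in at most one vertex, so for every $v$ at least $k$ of its $k+1$ chosen neighbors lie in $V(G)\setminus A$; hence $V(G)\setminus A$ is a $(k,k)$-dominating set. Since each gadget contributes at most $\binom{k+1}{2}=\tfrac{k(k+1)}{2}$ edges, $G'$ has at most $\tfrac{k(k+1)}{2}\,n$ edges, and Lemma~\ref{lem} with $\alpha=\tfrac{k(k+1)}{2}$ produces an independent set of size at least $\tfrac{n}{k(k+1)+1}$. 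Therefore $\gamma_{k,k}(G)\le n-\tfrac{n}{k(k+1)+1}=\tfrac{k(k+1)}{k(k+1)+1}\,n$.

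For the second bound, assuming $\delta(G)\ge 2k$, I would instead pick $2k$ neighbors of each $v$ and add a perfect matching of $k$ disjoint edges on them. Now an independent set $A$ contains at most one endpoint of each of the $k$ edges, so at most $k$ of the $2k$ chosen neighbors lie in $A$, leaving at least $k$ in $V(G)\setminus A$; again $V(G)\setminus A$ is $(k,k)$-dominating. This $G'$ has at most $kn$ edges, so Lemma~\ref{lem} with $\alpha=k$ yields an independent set of size at least $\tfrac{n}{2k+1}$ and hence $\gamma_{k,k}(G)\le\tfrac{2k}{2k+1}\,n$.

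The routine parts are the edge count and the final arithmetic; the step deserving the most care is verifying that each gadget forces the right number of chosen neighbors outside $A$. For the clique this rests on an independent set meeting a clique in at most one vertex, and for the matching on it meeting each edge in at most one vertex — exactly the properties that translate into ``at least $k$ neighbors in the complement.'' I would also note that coinciding gadget edges only lower the true edge count, so the ``at most $\alpha n$'' bound is safe, and that the minimum-degree hypotheses are precisely what guarantee enough distinct neighbors to carry out each construction.
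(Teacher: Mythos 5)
Your proposal is correct and follows essentially the same route as the paper's own proof: a clique gadget $K_{k+1}$ on $k+1$ chosen neighbors for the first bound, a matching of $k$ disjoint edges on $2k$ chosen neighbors for the second, followed by the edge count and Lemma~\ref{lem}. In fact you make explicit the verification (an independent set meets a clique in at most one vertex, and each matching edge in at most one endpoint) that the paper leaves implicit with ``similar to the previous case.''
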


\begin{proof}
Similar to the previous case, for each vertex $v$, we choose $k+1$ neighbors and connect every pair of them with $\frac{k(k+1)}{2}$ edges. The union of all these edges makes a graph $G'$ with at most $\frac{k(k+1)}{2} n$  edges, so by  Lemma \ref{lem} 
$G'$ has an independent set $A$ of size at least $\frac{1}{k(k+1)+1} n$ and hence $V(G)\setminus A$, forms a $(k,k)$-dominating set of size at most $\frac{k(k+1)}{k(k+1)+1} n$.

When $\delta\ge 2k$, for each vertex choose $2k$ of its neighbors and join them with $k$ disjoint edges. The union of all these edges makes a graph $G'$ with at most $kn$ edges and hence by  Lemma \ref{lem} $G'$ has an independent set $A$ of size at least $\frac{1}{2k+1} n$ and $V(G\setminus A)$ is a $(k,k)$-dominating set of size at most $\frac{2k}{2k+1} n$.
\end{proof}

In the following we generalize Theorem \ref{t1} and Theorem \ref{t2}.
When $\delta(G)=k+1+d$ for $0\leq d\leq k-1$, we construct the graph $G'$ as follows. For each vertex $v\in V(G)$, choose $k+1+d$ of its neighbors and partition them into $d+1$ parts and in each part
connect each pair of vertices by edges. Similar to Tur\'an's Theorem, one can make the parts almost of equal size so that the number of edges for each vertex $v$ is at most ${k+1+d\choose 2 }-(1-\frac{1}{d+1})\frac{(k+1+d)^2}{2}$ which is  $\frac{k(k+d+1)}{2d+2}$. So, we will get the following theorem

\begin{theorem} If $G$ is a graph with minimum degree $\delta=k+1+d$ for $0<d<k-1$, then $\gamma_{k,k}(G)\le \frac{2d+(k-d)(k-d+1)}{2d+(k-d)(k-d+1)+1} n$.
\end{theorem}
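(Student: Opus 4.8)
The plan is to instantiate the general construction described just above and then close with Lemma~\ref{lem}, exactly in the spirit of the proofs of Theorems~\ref{t1} and~\ref{t2}. First I would form the auxiliary graph $G'$ on the vertex set $V(G)$: using $\delta(G)\ge k+1+d$, for every vertex $v$ I pick $k+1+d$ of its neighbors and split them into $d+1$ blocks, turning each block into a clique of $G'$. The structural fact that drives everything is that the complement of any independent set then dominates correctly. Indeed, if $A$ is independent in $G'$, it meets each of the $d+1$ blocks attached to $v$ in at most one vertex, so at most $d+1$ of the $k+1+d$ chosen neighbors of $v$ belong to $A$, and hence at least $(k+1+d)-(d+1)=k$ of them lie in $V(G)\setminus A$. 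Since this holds for \emph{every} vertex $v$ simultaneously, $V(G)\setminus A$ is a $(k,k)$-dominating set, and the task reduces to maximizing $|A|$, i.e.\ to lower-bounding the independence number of $G'$.

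The quantitative core, and the step I expect to be the main obstacle, is counting the edges of $G'$, since the constant in the theorem is fixed entirely by how the $k+1+d$ neighbors are partitioned. To land on exactly the stated constant I would use the partition that interpolates the two regimes of Theorem~\ref{t2}: namely $d$ blocks of size $2$ together with a single block of size $k-d+1$, which is legitimate because $2d+(k-d+1)=k+1+d$ and uses $d+1$ blocks. At $d=0$ this is a single $(k+1)$-clique and at $d=k-1$ it is a matching of $k$ edges, recovering the two cases of Theorem~\ref{t2}. The edges contributed by $v$ then number $d\binom{2}{2}+\binom{k-d+1}{2}=d+\tfrac{(k-d)(k-d+1)}{2}$; writing $\alpha:=\tfrac12\big(2d+(k-d)(k-d+1)\big)$, and noting that the gadgets attached to different vertices may share edges, which can only lower the total distinct count, the graph $G'$ has at most $\alpha n$ edges.

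With this bound the conclusion is routine. Lemma~\ref{lem} supplies an independent set $A$ of $G'$ with $|A|\ge \tfrac{n}{2\alpha+1}$, whence
\begin{equation*}
\gamma_{k,k}(G)\le n-|A|\le \left(1-\frac{1}{2d+(k-d)(k-d+1)+1}\right)n=\frac{2d+(k-d)(k-d+1)}{2d+(k-d)(k-d+1)+1}\,n.
\end{equation*}
The only genuinely delicate point is thus the combinatorial optimization hidden in the edge count: balancing the blocks in Tur\'an fashion would make $\alpha$ smaller and hence strengthen the bound, but the interpolating partition above is the choice that reproduces the clean stated constant uniformly across the range $0<d<k-1$ while agreeing with Theorem~\ref{t2} at the two endpoints.
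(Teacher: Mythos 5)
Your proof is correct, but it is not the argument the paper sketches, and the difference is substantive: it lies entirely in how the $k+1+d$ chosen neighbors of each vertex are partitioned into $d+1$ cliques, which is exactly what fixes the constant. The paper balances the parts Tur\'an-style and claims a per-vertex edge count of $\binom{k+1+d}{2}-\bigl(1-\frac{1}{d+1}\bigr)\frac{(k+1+d)^2}{2}=\frac{k(k+d+1)}{2d+2}$; run through Lemma~\ref{lem}, this yields $\gamma_{k,k}(G)\le\frac{k(k+d+1)}{k(k+d+1)+d+1}\,n$, which coincides with the printed bound only at the endpoints $d=0$ and $d=k-1$ and is strictly stronger inside the range --- e.g.\ for $k=4$, $d=1$ the balanced construction gives $\frac{12}{13}n$ whereas the theorem asserts only $\frac{14}{15}n$, so the paper's own sketch and its stated theorem do not actually match. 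Your partition into $d$ pairs plus one clique on $k-d+1$ vertices is precisely the one whose edge count $d+\binom{k-d+1}{2}$ reproduces the stated constant, so your argument proves the theorem exactly as printed, and your closing remark that balancing the blocks would strengthen the bound is, in effect, the construction the paper describes. Your version also has a small rigor advantage: the paper's continuous Tur\'an expression is not always achievable with integer part sizes (for $k=5$, $d=2$ the balanced parts $3,3,2$ give $7>\frac{40}{6}$ edges per vertex), whereas your count is exact; the domination mechanism itself --- an independent set of $G'$ meets each of the $d+1$ cliques in at most one vertex, leaving at least $k$ chosen neighbors in the complement --- is identical and correct in both arguments.
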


\begin{theorem} If $G$ is a graph with $n$ vertices and minimum degree $\delta\ge 3$ such that at least half of vertices have degree at least $4$, then $\gamma_{2,2}(G)\le \frac{5}{6} n$.
\end{theorem}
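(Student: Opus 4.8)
The plan is to reuse the gadget framework of Theorem \ref{t1}, but to choose the gadget adaptively according to each vertex's degree. Recall that this construction produces a graph $G'$ on the same vertex set with the property that for every independent set $A$ of $G'$, the complement $V(G)\setminus A$ is a $(2,2)$-dominating set of $G$; the bound on $\gamma_{2,2}(G)$ then follows by applying Lemma \ref{lem} to $G'$. If $G'$ has at most $\alpha n$ edges, Lemma \ref{lem} yields an independent set of size at least $\frac{n}{2\alpha+1}$, hence $\gamma_{2,2}(G)\le \frac{2\alpha}{2\alpha+1}n$. To reach the target $\frac{5}{6}n$ it therefore suffices to build $G'$ with at most $\frac{5}{2}n$ edges, since $\frac{2\alpha}{2\alpha+1}=\frac{5}{6}$ exactly when $\alpha=\frac{5}{2}$.

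First I would partition the vertices by degree. For each vertex $v$ of degree exactly $3$, I attach the triangle gadget of Figure \ref{fig1} on three chosen neighbours, contributing $3$ edges. For each vertex $v$ of degree at least $4$, I attach the two-disjoint-edges gadget of Figure \ref{fig2} on four chosen neighbours, contributing $2$ edges. The essential point is that the $(2,2)$-domination guarantee is purely local to each vertex: in any independent set $A$ of $G'$, the triangle forces at least two of $v$'s three chosen neighbours to lie outside $A$, and the matching forces at least two of $v$'s four chosen neighbours to lie outside $A$. These are precisely the two arguments used in Theorem \ref{t1}, and since they are independent across vertices, mixing the gadgets preserves the property that $V(G)\setminus A$ is $(2,2)$-dominating.

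It then remains to count edges. Let $n_3$ denote the number of degree-$3$ vertices; since at least half of the vertices have degree at least $4$, and $\delta(G)\ge 3$ forces every remaining vertex to have degree exactly $3$, we have $n_3\le \frac{n}{2}$. Taking the union of all gadget edges gives at most $3n_3+2(n-n_3)=2n+n_3\le 2n+\frac{n}{2}=\frac{5}{2}n$ edges, so we may take $\alpha=\frac{5}{2}$. Lemma \ref{lem} then produces an independent set of size at least $\frac{n}{6}$, whence $\gamma_{2,2}(G)\le \frac{5}{6}n$.

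There is no genuine obstacle here beyond careful bookkeeping; the one point worth checking is that the edge count is maximised exactly when the degree-$3$ vertices are as numerous as the hypothesis allows. A degree-$3$ vertex is the expensive case, costing $3$ edges rather than $2$, so the linear objective $2n+n_3$ is maximised by pushing $n_3$ to its cap $\frac{n}{2}$. The hypothesis ``at least half have degree $\ge 4$'' is precisely the threshold that caps $n_3$ at $\frac{n}{2}$ and makes the edge total land on $\frac{5}{2}n$, which is exactly the value of $\alpha$ for which Lemma \ref{lem} delivers the bound $\frac{5}{6}n$.
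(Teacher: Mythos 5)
Your proposal is correct and follows essentially the same route as the paper: the same degree-adaptive gadget choice (triangle for degree-$3$ vertices, two disjoint edges for degree-$\ge 4$ vertices), the same edge count of at most $\frac{5}{2}n$, and the same application of Lemma \ref{lem} to obtain an independent set of size $\frac{n}{6}$. The only difference is cosmetic bookkeeping: you write the count as $3n_3+2(n-n_3)\le \frac{5}{2}n$ where the paper writes $3n-\frac{1}{2}n=\frac{5}{2}n$.
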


\begin{proof} The graph $G'$ is built by vertices of $G$ and for any vertex of degree $3$ all three neighbors are joined by three edges and for a vertex of degree bigger than $3$, only two disjoint edges between two pairs of neighbors are added. As before a complement of an independent set for $G'$ is a $(2,2)$-dominating set for $G$. $G'$ has at most $3n-\frac{1}{2} n= \frac{5}{2} n$ edges. So, by Lemma \ref{lem}, $G'$ has an independent set  $A$ of size at least $\frac{n}{6}$ and therefore $V(G)\setminus A$ has size at most $\frac{5}{6} n$.
\end{proof}

\begin{theorem} 
 Let $a<b$ and $G$ is a graph with minimum degree $\delta\ge a+b$, then $\gamma_{a,b}(G)\le \frac{2b}{2b+1}n$. If $G$ has a spanning regular subgraph of degree $b-a$, then we have $\gamma_{a,b}(G)\le \frac{a+b}{a+b+1} n$. In particular if $G$ is a graph with $\delta\ge 2k-1$ and $G$ has a perfect matching then the $k$-tuple dominating number  $\gamma_{k-1,k}(G)\le \frac{2k-1}{2k} n$.
 \end{theorem}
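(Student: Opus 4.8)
The plan is to follow the same framework as in the proofs of Theorem~\ref{t1} and Theorem~\ref{t2}: build an auxiliary graph $G'$ on the vertex set $V(G)$ so that the complement of every independent set of $G'$ is an $(a,b)$-dominating set of $G$, and then lower-bound the independence number of $G'$ via Lemma~\ref{lem} by controlling the number of edges of $G'$. The genuinely new feature, forced by the asymmetry $a<b$, is that the vertex $v$ itself must participate in the gadget built around it, so that placing $v$ in the independent set $A$ forces extra neighbors out of $A$ precisely for those vertices that need the stronger requirement $b$.

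Concretely, for each vertex $v$ I would select $a+b$ of its neighbors (possible since $\delta\ge a+b$) and split them into a block $B_v$ of $2a$ vertices and a block $C_v$ of the remaining $b-a$ vertices. On $B_v$ I place $a$ disjoint edges (a perfect matching of $B_v$), and I join $v$ in $G'$ to each of the $b-a$ vertices of $C_v$. To verify the domination property, let $A$ be any independent set of $G'$ and set $S=V(G)\setminus A$. Since the matching on $B_v$ consists of $a$ disjoint edges, at most $a$ vertices of $B_v$ lie in $A$, hence at least $a$ of them lie in $S$; this already guarantees that every $v\in S$ has at least $a$ neighbors in $S$. If in addition $v\in A$, then because $A$ is independent and $v$ is adjacent in $G'$ to every vertex of $C_v$, none of the $b-a$ vertices of $C_v$ lies in $A$, so all $b-a$ of them lie in $S$; together with the $a$ contributed by $B_v$ this gives $v$ at least $b$ neighbors in $S$. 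Thus $S$ is an $(a,b)$-dominating set.

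It then remains to count edges. Each vertex contributes $a$ matching edges on $B_v$ and $b-a$ edges from $v$ to $C_v$, so $G'$ has at most $an+(b-a)n=bn$ edges, and Lemma~\ref{lem} yields independence number at least $\frac{n}{2b+1}$, i.e. $\gamma_{a,b}(G)\le\frac{2b}{2b+1}n$, which is the first bound. For the improved bound I would realize the $C_v$-edges through a spanning $(b-a)$-regular subgraph $H$: take $C_v$ to be the $b-a$ neighbors of $v$ in $H$, so that each such edge is shared by its two endpoints and the total number of $C$-edges drops from $(b-a)n$ to $|E(H)|=\frac{(b-a)n}{2}$. The matching edges still number at most $an$, so $G'$ has at most $an+\frac{(b-a)n}{2}=\frac{a+b}{2}n$ edges, and Lemma~\ref{lem} gives independence number at least $\frac{n}{a+b+1}$, whence $\gamma_{a,b}(G)\le\frac{a+b}{a+b+1}n$. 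The last assertion is then the special case $a=k-1$, $b=k$, where $b-a=1$ and a spanning $1$-regular subgraph is exactly a perfect matching, giving $\gamma_{k-1,k}(G)\le\frac{2k-1}{2k}n$.

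The main point to get right is the asymmetric gadget together with its interaction with the edge count: the matching on $B_v$ is what secures the weaker requirement $a$ for vertices of $S$, while the star from $v$ to $C_v$ supplies the additional $b-a$ dominators exactly for vertices placed in $A$. The spanning-subgraph hypothesis is essential only for the sharing that halves the contribution of the $C_v$-edges; without it one can afford only the private edges that yield the weaker $\frac{2b}{2b+1}$ bound. I would also double-check that $B_v$ and $C_v$ can be chosen disjoint, which needs exactly $\delta\ge 2a+(b-a)=a+b$, matching the hypothesis.
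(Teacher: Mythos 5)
Your proposal is correct and coincides with the paper's own proof: the paper builds exactly the same asymmetric gadget ($a$ disjoint edges on $2a$ chosen neighbors of $v$ plus $b-a$ edges from $v$ itself to further neighbors), performs the same verification that the complement of an independent set of $G'$ is an $(a,b)$-dominating set, and uses the same edge counts $bn$ and $\frac{a+b}{2}n$ (the latter by sharing the edges of the spanning $(b-a)$-regular subgraph between their two endpoints) fed into Lemma~\ref{lem}. Your specialization of the last part to $a=k-1$, $b=k$, where a spanning $1$-regular subgraph is a perfect matching, is likewise identical to the paper's.
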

 
 \begin{proof} The construction of our graph $G'$ is as follows. The vertices of $G'$ are the same as $G$. For any vertex $v$ of $G$ we add $a$ disjoint edges between $a$ pairs of neighbors of $v$ and add $b-a$ edges adjacent to $v$ with end-points different from the chosen $a$ pairs of the neighbors in the first step. Hence we have $b$ edges associated to each vertex and $G'$ has at most $bn$ vertices. If $A$ is an independent set for $G'$. For any vertex $v\in A$ all the $b-a$ edges of $G'$ adjacent to $v$ have end-points outside $A$ and at least $a$ vertices of the $a$ disjoint edges beween neigbors of $v$ are outside $A$. Hence, $v$ has at least $b$ neighbors outside $A$. If $v$ is not in $A$, then we have at least $a$ vertices among neighbors of $v$ that do not belong to $A$. Therefore, the complement of $A$ is an $(a,b)$-dominating set for $G$. By  Lemma \ref{lem}, a graph with $bn$ edges has an independent set of size at least $\frac{1}{2b+1} n$ vertices and the first part of the theorem is proved. If $G$ has an spanning $(b-a)$-regular subgraph, then one can use that subgraph for the $b-a$ edges for each vertex adjacent to that vertex and therefore the number of edges of $G'$ is at most $(a+\frac{b-a}{2}) n= \frac{a+b}{2} n$ and  the independent set has a size of at least $\frac{1}{a+b+1} n$ vertices and the second part is proved as well. To prove the last part, notice that existence of a spanning $1$-regular subgraph of $G$ is equivalent to existence of a perfect matching for $G$.
 \end{proof}

\section{Upper bound for $(a,b)$-domination number using Lov\'asz local lemma}
In this section, we give a probabilistic method to compute upper bounds for $(a,b)$-domination number of graphs. Our idea is to choose some vertices randomly as the $(a,b)$-dominating set of $G$. Using this method we can improve previous results in some special cases.

\begin{theorem}
For any $\frac{1}{2} \leq \alpha<1$, there is $r_0>0$ such that, if $r\geq r_0$ and $G$ is an $r$-regular graph,  $\gamma_{a,b}(G)\leq \alpha n$. 
\end{theorem}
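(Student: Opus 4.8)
The plan is to choose the dominating set at random, use the Lov\'asz local lemma to repair all local domination failures simultaneously, and then check separately that the random set is not too large. Write $c=\max\{a,b\}$ and observe that if every vertex of $G$ (whether or not it lies in $S$) has at least $c$ neighbors in $S$, then $S$ is automatically an $(a,b)$-dominating set; so it suffices to build a set $S$ with this stronger property and with $|S|\le\alpha n$. Fix once and for all a constant $p$ with $0<p<\alpha$; since $\alpha\ge\frac12$ we may for instance take $p=\alpha/2\ge\frac14$. Put each vertex of $G$ into $S$ independently with probability $p$, and for each vertex $v$ let $A_v$ be the bad event that fewer than $c$ of the $r$ neighbors of $v$ land in $S$.

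First I would estimate $\Pr[A_v]$. Since the number of neighbors of $v$ in $S$ is a $\mathrm{Binomial}(r,p)$ variable with mean $pr\to\infty$ while $c$ is a fixed constant, a standard lower-tail Chernoff bound gives $\Pr[A_v]\le e^{-pr/8}$ for all sufficiently large $r$. Next I would bound the dependence: $A_v$ is a function only of the membership indicators of the vertices in $N(v)$, so $A_u$ and $A_v$ can be correlated only when $N(u)\cap N(v)\neq\emptyset$, i.e. when $u$ lies within distance two of $v$; in an $r$-regular graph there are at most $r(r-1)\le r^2$ such vertices $u$. Hence each $A_v$ is mutually independent of all but at most $d\le r^2$ of the other bad events. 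Because $e\,\Pr[A_v]\,(d+1)\le e\,(r^2+1)e^{-pr/8}\to 0$ as $r\to\infty$, the symmetric local lemma applies for all $r$ beyond some threshold $r_1$, and it yields not merely positivity but the quantitative bound $\Pr\big[\bigcap_v \overline{A_v}\big]\ge (1-\tfrac1{r^2+1})^n\ge e^{-2n/r^2}$.

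The remaining step is to control $|S|$. Since $\mathbb{E}|S|=pn$ with $p<\alpha$, Hoeffding's inequality gives $\Pr[\,|S|\ge \alpha n\,]\le e^{-2(\alpha-p)^2 n}$. The key comparison is now between the two exponents: the local-lemma lower bound decays like $e^{-2n/r^2}$, whose rate $2/r^2$ tends to $0$, whereas the size-overflow probability decays at the fixed positive rate $2(\alpha-p)^2$. Consequently, as soon as $r>1/(\alpha-p)$ we have $2/r^2<2(\alpha-p)^2$ and therefore $\Pr\big[\bigcap_v\overline{A_v}\big]>\Pr[\,|S|\ge\alpha n\,]$. Taking $r_0=\max\{r_1,\lceil 1/(\alpha-p)\rceil+1\}$, for every $r\ge r_0$ the event $\bigcap_v\overline{A_v}$ has probability strictly larger than $\{|S|\ge\alpha n\}$, so $\Pr\big[\bigcap_v\overline{A_v}\cap\{|S|<\alpha n\}\big]>0$. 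Some outcome thus realizes a set $S$ in which every vertex has at least $c$ neighbors in $S$ and $|S|<\alpha n$, and such an $S$ is an $(a,b)$-dominating set, proving $\gamma_{a,b}(G)\le\alpha n$.

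The step I expect to be the real obstacle, and the place where the hypothesis that $r$ is large is genuinely used, is this last cross-over of exponents. The local lemma only guarantees a success probability that is exponentially small in $n$, so it cannot simply be combined with a union bound on the size; one must verify that the growing dependency degree $r^2$ makes the local-lemma exponent $n/r^2$ vanish faster than the constant Chernoff exponent $(\alpha-p)^2n$ stays bounded away from zero. I would double-check the direction and constants of both tail estimates carefully, since a careless Chernoff or Hoeffding bound could leave the exponents on the wrong side of the inequality and break the argument.
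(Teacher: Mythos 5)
Your proof is correct, but it follows a genuinely different route from the paper's, and the difference lies precisely at the step you flagged as the crux. The paper also applies the symmetric local lemma with dependency degree $r^2$, but its random experiment is a uniform $N$-coloring of the vertices rather than Bernoulli sampling: the bad event at $v_i$ says that deleting some color class would violate the domination condition at $v_i$, so a coloring avoiding all bad events has the property that the complement of \emph{every} color class is $(a,b)$-dominating. The size bound then costs nothing probabilistic: by pigeonhole some color class has at least $n/N$ vertices, and deleting it leaves a set of size at most $(1-\frac{1}{N})n\le\alpha n$ once $N\ge 2$ is chosen with $1-\frac{1}{N}\le\alpha$ (this integrality constraint is exactly where the hypothesis $\alpha\ge\frac{1}{2}$ enters). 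That trick lets the paper get by with the purely qualitative local lemma it states. You instead control $|S|$ probabilistically, which forces you to invoke the quantitative conclusion $\Pr[\bigcap_v\overline{A_v}]\ge(1-\frac{1}{r^2+1})^n$ --- a standard but strictly stronger form than the one quoted in the paper --- and then to win the exponent race against Hoeffding, correctly observing that $2n/r^2$ falls below $2(\alpha-p)^2 n$ once $r>1/(\alpha-p)$, and that $\Pr[A\cap B^c]\ge\Pr[A]-\Pr[B]>0$ closes the argument; your dependency count $r(r-1)$ and the Chernoff estimate are also fine. Both proofs are sound, so it is worth noting what each buys: the paper's method needs only the weak form of the lemma, keeps the roles of $a$ and $b$ separate (which feeds its numerical table for small $r$), but can only certify densities of the form $1-\frac{1}{N}$ and hence nothing below $\frac{1}{2}$; your method coarsens $a,b$ to $\max\{a,b\}$ and uses heavier probabilistic machinery, but it actually proves more than the statement asks, since the restriction $\alpha\ge\frac{1}{2}$ is never really used --- your argument works verbatim for every fixed $\alpha\in(0,1)$.
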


Suppose that we use $N$ colors to color the vertices of $G$, such that each vertex $v_i$ has one of the colors of $c_1,c_2,..,c_N$ with probability $1/N$. Then, there are at least $n/N$ of the vertices with the same color, suppose red, we select all the vertices except vertices with color red as a possible $(a,b)$-dominating set. Now, we want to show that the probability that the selected vertices form an $(a,b)$-dominating set is greater that $0$, which means there is a coloring that the selected vertices form an $(a,b)$-dominating set for $G$.

Suppose that we choose all the colors except red as an $(a,b)$-dominating set, then if all the vertices with color red
have at least $b$ neighbors with colors other than red and all the vertices with color other than red have at least $a$ neighbors
with colors other than red, this means that each selected vertex has at least $a$ neighbors that are selected and each vertex that is not selected has at least $b$ neighbors that are selected. This implies that the selected vertices form an $(a,b)$-dominating set.

Now we want to show that there exists an integer number $N$ such that if we randomly color the vertices of $G$ with $N$ colors, then there exists a coloring of vertices that 
if we select all the vertices except the vertices with an arbitrary color, this selected set is an $(a,b)$-dominating set.
Note that we wish to minimize $N$.
First, we present Lov\'asz local lemma,
\begin{lemma}
Let $A_1, A_2,..., A_n$ be a sequence of events such that each event occurs with probability at most $p$ and such that each event is independent of all the other events except for at most $d$ of them. If $epd\leq 1$,
then there is a nonzero probability that none of the events occurs.
\end{lemma}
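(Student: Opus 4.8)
The plan is to prove a stronger, asymmetric version by induction and then recover the symmetric statement by a single substitution. First I would reformulate the hypothesis in terms of a dependency structure: for each $i$ let $\Gamma(i)$ denote the set of indices $j\neq i$ (at most $d$ of them) such that $A_i$ is \emph{not} mutually independent of $A_j$. The engine of the whole argument is the inductive claim that one can attach reals $x_i\in[0,1)$ so that, for every $i$ and every subset $S\subseteq\{1,\dots,n\}\setminus\{i\}$,
$$\Pr\Big[A_i \,\Big|\, \bigcap_{j\in S}\overline{A_j}\Big]\le x_i,$$
provided $\Pr[A_i]\le x_i\prod_{j\in\Gamma(i)}(1-x_j)$ for each $i$. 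I would prove this by induction on $|S|$, the base case $S=\emptyset$ being immediate from the hypothesis since $\prod_{j\in\Gamma(i)}(1-x_j)\le 1$.

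For the inductive step I would split $S=S_1\cup S_2$ with $S_1=S\cap\Gamma(i)$ and $S_2=S\setminus\Gamma(i)$, and write the conditional probability as the ratio
$$\Pr\Big[A_i \,\Big|\, \bigcap_{j\in S}\overline{A_j}\Big]=\frac{\Pr\big[A_i\cap\bigcap_{j\in S_1}\overline{A_j}\,\big|\,\bigcap_{k\in S_2}\overline{A_k}\big]}{\Pr\big[\bigcap_{j\in S_1}\overline{A_j}\,\big|\,\bigcap_{k\in S_2}\overline{A_k}\big]}.$$
The numerator I bound from above by $\Pr[A_i\mid\bigcap_{k\in S_2}\overline{A_k}]=\Pr[A_i]$, where the equality uses that $A_i$ is mutually independent of the family $\{A_k:k\in S_2\}$ of non-neighbors; this gives at most $x_i\prod_{j\in\Gamma(i)}(1-x_j)$. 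The denominator I expand by the chain rule over the elements of $S_1$ and bound each factor $\Pr[\overline{A_{j_\ell}}\mid\cdots]$ from below by $1-x_{j_\ell}$ via the inductive hypothesis, which applies because every such conditioning event involves strictly fewer than $|S|$ of the $A$'s; hence the denominator is at least $\prod_{j\in S_1}(1-x_j)\ge\prod_{j\in\Gamma(i)}(1-x_j)$, the last inequality holding since $S_1\subseteq\Gamma(i)$ and each factor is at most $1$. Dividing yields exactly $x_i$, closing the induction. The conclusion then follows from one more application of the chain rule: $\Pr[\bigcap_i\overline{A_i}]=\prod_i\Pr[\overline{A_i}\mid\bigcap_{j<i}\overline{A_j}]\ge\prod_i(1-x_i)>0$.

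To land on the symmetric statement as worded I would set all $x_i$ equal to the common value $\tfrac{1}{d+1}$ and invoke the elementary bound $(1-\tfrac1{d+1})^{d}>e^{-1}$, which gives $x_i\prod_{j\in\Gamma(i)}(1-x_j)\ge\tfrac{1}{e(d+1)}$ since $|\Gamma(i)|\le d$; thus the uniform hypothesis $\Pr[A_i]\le p$ verifies the general condition as soon as $ep(d+1)\le 1$. The discrepancy from the stated $epd\le1$ is a single unit in $d$, immaterial in the regime where the lemma is applied.

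The step I expect to be most delicate is the bookkeeping around what "independent of all but $d$" actually licenses. The numerator bound is valid only because $A_i$ is independent of the \emph{joint} event $\bigcap_{k\in S_2}\overline{A_k}$, not merely of each $\overline{A_k}$ separately, so I must appeal to mutual independence from the whole non-neighbor family rather than to pairwise independence. A secondary but necessary point, which I would carry alongside the induction, is that every conditioning event appearing in the ratios has strictly positive probability: because each conditional $\Pr[A_i\mid\cdots]\le x_i<1$, no partial intersection $\bigcap_{j\in T}\overline{A_j}$ can collapse to probability zero, so all the divisions above are legitimate.
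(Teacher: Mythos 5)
Your argument has no counterpart in the paper to compare against: the paper quotes this lemma as a known tool (it is the symmetric Lov\'asz local lemma) and gives no proof of it. On its own terms, your induction is the standard Alon--Spencer proof of the asymmetric local lemma and it is sound: the split $S=S_1\cup S_2$, the bound of the numerator by $\Pr[A_i]$ using mutual independence of $A_i$ from the non-neighbour family, the chain-rule lower bound $\prod_{j\in S_1}(1-x_j)$ on the denominator, and the running verification that all conditioning events have positive probability are all exactly right. You are also correct that the hypothesis must be read as \emph{mutual} independence of $A_i$ from the whole family of non-neighbours rather than pairwise independence; the paper's informal phrasing glosses over this, and your proof genuinely needs the stronger reading at the numerator step.

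The one substantive issue is the point you dismiss at the end. With $d$ as defined in the statement (the number of \emph{other} events on which $A_i$ may depend), your substitution $x_i=\frac{1}{d+1}$ proves the condition $ep(d+1)\le 1$, not the stated $epd\le 1$, and this is not an artifact of your particular choice of $x_i$: the best uniform choice gives the threshold $\max_{x} x(1-x)^d=\frac{d^d}{(d+1)^{d+1}}$, which is strictly smaller than $\frac{1}{ed}$ for every finite $d$ (since $(1+\frac{1}{d})^{d+1}>e$), so the asymmetric lemma as you prove it cannot deliver the statement as worded. The $epd\le 1$ form is nevertheless a true theorem: it follows from Shearer's sharp threshold $p<\frac{(d-1)^{d-1}}{d^d}$, which exceeds $\frac{1}{ed}$ because $(1-\frac{1}{d})^{d-1}>e^{-1}$, but that is a genuinely different and harder proof. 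Alternatively, the stated inequality is correct verbatim if $d$ is read as counting the event itself, i.e., $|\Gamma(i)|\le d-1$, which is how some sources phrase the lemma. For the paper's application the slack is real if minor: the paper verifies $ePr^2\le 1$ with dependency degree $d=r^2$, whereas your ($ep(d+1)\le 1$) version would require $eP(r^2+1)\le 1$, so either the marginal entries of their table should be rechecked against the weaker constant or the Shearer form should be invoked explicitly. Flagging the discrepancy as ``immaterial'' is fair in spirit, but a complete write-up should either prove the $ep(d+1)\le 1$ version and restate the lemma accordingly, or cite the sharper bound that actually covers $epd\le 1$.
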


According to our coloring, let $A_i$ be an event such that if $v_i$ is not selected in our dominating set, then $v_i$ has less than $b$ neighbors with color other than the color of $v_i$ and if $v_i$ is selected in our dominating set, then $v_i$ has less than $a$ neighbors with color other than the color of vertices that are not selected in our dominating set. Then, for each $A_i$

$$P=P(A_i)= N\frac{1}{N}  \frac{\sum^{b-1}_{j=0}{ r\choose j} (N-1)^j }{N^r}+ N\frac{N-1}{N}  \frac{\sum^{a-1}_{j=0} {r\choose j} (N-1)^j }{N^r}$$ 

On the other hand, each event $A_i$ depends on it's neighbors and neighbors of it's neighbors. Which means $A_i$ depends on at most $r^2$ of the events. So, if
$$eP r^2\leq 1,$$ then there is a nonzero probability that none of the events occurs, which means all the  selected vertices have at least $a$ neighbors in the selected vertices and the vertices that are not selected have at least $b$ neighbors in the selected vertices. In Table \ref{com}, we have calculated the minimum value of $N$ to compute upper bounds for various values of 
$r$, $a$ and $b$.
For a given $N\geq 2$, the $ePr^2$ is a fraction that its numerator is a polynomial in $r$ and its denumerator is exponential in $r$. So, there exists  $r_0>0$ such that if $r\geq r_0$, then  $ePr^2<1$.

Note that in our computation we have assumed that the graph $G$ is regular. It can be easily seen that for arbitrary graphs with minimum degree $\delta$ and maximum degree $\Delta$, in calculating $P(A_i)$ instead of $r$ we can use $\delta$ and each $P(A_i)$ depends on at most $\Delta^2$ of the vertices. So, for the cases when the graph is not regular we can use this formula again.

\begin{table}[h]
\caption{Upper bounds for $\gamma_{a,b}(G)$ achieved by  Lov\'asz local lemma for various values of $a$, $b$, $\delta$ and $\Delta$}
\begin{center}
\begin{tabular}{|c|c|c|c|c|}
\hline
$\delta$& $\Delta$& $a$& $b$& Upper bound \\
 \hline
7&7&2&2&$\frac{3}{4}n$\\
\hline
7&8&2&2&$\frac{3}{4}n$\\
\hline

9&9&2&2&$\frac{2}{3}n$\\
\hline
9&10&2&2&$\frac{2}{3}n$\\
\hline
9&11&2&2&$\frac{2}{3}n$\\
\hline

14&14&2&2&$\frac{1}{2}n$\\
\hline
8&8&1&2&$\frac{2}{3}n$\\
\hline

8&9&1&2&$\frac{2}{3}n$\\
\hline
8&10&1&2&$\frac{2}{3}n$\\
\hline
8&11&1&2&$\frac{2}{3}n$\\
\hline
13&13&1&2&$\frac{1}{2}n$\\
\hline
13&14&1&2&$\frac{1}{2}n$\\
\hline

8&8&2&1&$\frac{2}{3}n$\\
\hline
13&13&2&1&$\frac{1}{2}n$\\
\hline
13&14&2&1&$\frac{1}{2}n$\\

\hline
\end{tabular}
\end{center}
\label{com}
\end{table}

\section{Conclusion}
In this paper we present improved upper bounds for $(a,b)$-domination number of graphs. 
For a given graph $G$, we defined graph $G'$ such that removing any independent set of $G'$ gives an $(a,b)$-dominating set for $G$. We use  Tur\'an's Theorem to compute lower bound for the independence number of $G'$. 
Next, we present a randomized approach to compute upper bound of $(a,b)$-domination number that uses Lov\'asz local lemma.
In some cases we improve the previous results and also we present some new upper bounds that had not been studied before as far as we know.
There are still open problems in this regard.

\end{document}